\documentclass[12pt]{amsart}
\usepackage{stmaryrd}
\usepackage{mathrsfs}
\usepackage{amsmath,amssymb}
\usepackage{amsfonts}
\usepackage{amsthm}
\usepackage{latexsym}
\usepackage{graphicx,color}
\def\p{\partial}

\def\ii{\sqrt{-1}}

\def\R{\mathbb{R}}
\def\C{\mathbb{C}}

\def\vv<#1>{\langle#1\rangle}
\def\ol{\overline}

\def\1{\mathbf{1}}

\def\Aut{{\rm Aut}}
\def\seq{\overset{=}{\sim}}

\def\XXint#1#2{\setbox0=\hbox{$#1{#2}{\int}$}{#2}\kern-.5\wd0 }

\def\XXint#1#2#3{{\setbox0=\hbox{$#1{#2#3}{\int}$}
     \vcenter{\hbox{$#2#3$}}\kern-.5\wd0}}

\def\vv<#1>{{\left\langle#1\right\rangle}}

\def\sph{\mathbb{S}}

\def\Deg{{\rm Deg}}

\def\Z{\mathbb{Z}}

\def\wt{\widetilde}

\def\Deg{{\rm Deg}}
\def\diag{{\rm diag}}
\def\abs{{\rm abs}}

\newtheorem{thm}{Theorem}[section]

\newtheorem{lem}{Lemma}[section]
\newtheorem{prop}{Proposition}[section]
\newtheorem{cor}{Corollary}[section]
\theoremstyle{definition}
\newtheorem{defn}{Definition}[section]
\theoremstyle{remark}

\newtheorem{rem}{Remark}[section]

\theoremstyle{question}

\theoremstyle{conjecture}

\numberwithin{equation}{section}

\begin{document}
\title{Weighted discrete tori and weighted trigonometric sums}

\author{Shuofeng Huang}
\address{Department of Mathematics, Shantou University, Shantou, Guangdong, 515063, China}
\email{24sfhuang@stu.edu.cn}
\author{Chengjie Yu$^1$}
\address{Department of Mathematics, Shantou University, Shantou, Guangdong, 515063, China}
\email{cjyu@stu.edu.cn}
\thanks{$^1$Research partially supported by GDNSF with contract no. 2025A1515011144 and 2026A1515012267.}
\renewcommand{\subjclassname}{%
  \textup{2010} Mathematics Subject Classification}
\subjclass[2010]{Primary 11L03; Secondary 05C81}
\date{}
\keywords{weighted discrete tori, weighted trigonometric sum, heat kernel, Poisson summation formula}
\begin{abstract}
In this paper, we obtain a weighted trigonometric summation formula which is an extension of the trigonometric summation formula by Grigor'yan, Lin and Yau \cite{GLY}. 
\end{abstract}
\maketitle
\markboth{Huang \& Yu}{Weighted discrete tori and weighted trigonometric sums}
\section{Introduction}
In this paper, we obtain the following weighted trigonometric summation formula.
\begin{thm}\label{thm-main}Let $A$ be a nonsingular $d\times d$ integer matrix  and $w\in \R^d$. Then,
\begin{equation}\label{eq-weighted-sum}
\begin{split}
\sum_{\xi\in T_A^*}\vv<w,\cos(2\pi\xi)>^n=\frac{|\det A|}{2^n}\sum_{x\in A\Z^d}\sum_{y\in \mathscr{P}(\frac{n-|x|}{2},d)}\frac{n!}{(\abs(x)+y)!y!}w^{\abs(x)+2y}
\end{split}
\end{equation} 
for any $n\geq 1$. Here, $T_A^*=(A^{T})^{-1}\Z^d/\Z^d$ and for any $\xi\in T_A^*$, 
$$\cos(2\pi\xi):=\left(\cos(2\pi\xi_1),\cos(2\pi\xi_2),\cdots,\cos(2\pi\xi_d)\right)\in \R^d,$$
 $$\abs(x)=(|x_1|,|x_2|,\cdots,|x_d|)\mbox{ and }|x|=\sum_{i=1}^d|x_i|$$ 
for $x\in \Z^d$, and 
$$x!=x_1!x_2!\cdots x_d!\mbox{ and } w^x=w_1^{x_1} w_2^{x_2}\cdots w_d^{x_d}$$
for $x\in \Z_{\geq 0}^d$. Moreover, for any $k\in \Z_{\geq 0}$, 
$$\mathscr{P}(k,d)=\{x\in \Z^d_{\geq 0}\ |\ |x|=k\}.$$
When $k<0$ or $k\not\in \Z$, $\mathscr{P}(k,d)=\emptyset.$
\end{thm}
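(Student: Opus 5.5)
The plan is to expand the left-hand side with exponentials and then apply the orthogonality of characters on the finite group $T_A^*$.

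First, write $\cos(2\pi\xi_i)=\frac12(e^{2\pi\ii\xi_i}+e^{-2\pi\ii\xi_i})$, so that
$$\vv<w,\cos(2\pi\xi)>=\frac12\sum_{i=1}^d w_i\left(e^{2\pi\ii\vv<e_i,\xi>}+e^{-2\pi\ii\vv<e_i,\xi>}\right).$$
Raising this to the $n$-th power and using the multinomial theorem over the $2d$ terms gives, with multiplicities $a_i$ for $+e_i$ and $b_i$ for $-e_i$ (where $a,b\in\Z^d_{\ge0}$ and $|a|+|b|=n$),
$$\vv<w,\cos(2\pi\xi)>^n=\frac1{2^n}\sum_{|a|+|b|=n}\frac{n!}{a!\,b!}\,w^{a+b}\,e^{2\pi\ii\vv<a-b,\xi>}.$$

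Next, sum over $\xi\in T_A^*$. The pairing $e^{2\pi\ii\vv<x,\xi>}$ is well defined for $x\in\Z^d$ and $\xi\in(A^T)^{-1}\Z^d/\Z^d$. I will prove the character sum identity: $\sum_{\xi\in T_A^*}e^{2\pi\ii\vv<x,\xi>}$ equals $|\det A|$ if $x\in A\Z^d$ and $0$ otherwise. The order $|T_A^*|=|\det A|$ follows from the index $[(A^T)^{-1}\Z^d:\Z^d]=|\det A^{-1}|^{-1}$, for example via Smith normal form. For $\xi=(A^T)^{-1}m$ we have $\vv<x,\xi>=\vv<A^{-1}x,m>$. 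If $A^{-1}x\in\Z^d$, every term is $1$. Otherwise there is some $m$ with $\vv<A^{-1}x,m>\notin\Z$, so $\xi\mapsto e^{2\pi\ii\vv<x,\xi>}$ is a nontrivial character of the finite group and its sum vanishes. This is the only genuinely group-theoretic step, and it is where care is needed. Smith normal form $A=UDV$ with $U,V$ unimodular reduces everything to the diagonal case, which is a product of cyclic groups.

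Combining the two steps,
$$\sum_{\xi\in T_A^*}\vv<w,\cos(2\pi\xi)>^n=\frac{|\det A|}{2^n}\sum_{x\in A\Z^d}\ \sum_{\substack{a-b=x\\ |a|+|b|=n}}\frac{n!}{a!\,b!}w^{a+b}.$$
It remains to reparametrize the pairs $(a,b)$ with $a-b=x$ and $|a|+|b|=n$. Set $y_i=\min(a_i,b_i)$. Then $\{a_i,b_i\}=\{y_i,\,y_i+|x_i|\}$, and the pair is uniquely determined by $y\in\Z^d_{\ge0}$ and the sign of $x_i$. Hence
$$a!\,b!=(\abs(x)+y)!\,y!,\qquad a+b=\abs(x)+2y,\qquad |a|+|b|=|x|+2|y|=n.$$
The last condition says $|y|=\frac{n-|x|}{2}$, which means exactly $y\in\mathscr P(\frac{n-|x|}{2},d)$. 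If $n-|x|$ is odd or negative there are no such pairs, which matches the convention that $\mathscr P$ is empty in those cases. Substituting this reparametrization gives \eqref{eq-weighted-sum}.

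The main obstacle is just the bookkeeping in the final bijection, together with justifying the orthogonality relation and $|T_A^*|=|\det A|$ for a general nonsingular integer matrix.
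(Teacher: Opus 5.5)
Your proof is correct, and it takes a genuinely different route from the paper.

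\textbf{The paper's route.} The paper argues spectrally. It puts normalized weights on $\Z^d$: edge weights $w_i>0$ and vertex measure $\sigma=2\sum_i w_i$. It computes the discrete-time heat kernel $p_n$ on $\Z^d$ by counting walks, and passes to the torus $T_A=\Z^d/A\Z^d$ by summing over translates in $A\Z^d$. It identifies the spectrum of $T_A$ through the characters $f_\xi$ and their orthogonality. It then equates the two sides of the trace formula $\sum_i(1-\mu_i)^n=\sum_x q_n(x,x)m_x$. Finally, it extends from $w\in\R^d_{>0}$ to all $w\in\R^d$ because both sides are polynomials in $w$.

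\textbf{How your route relates.} Your argument is essentially that trace identity unwound by hand. The multinomial expansion over the $2d$ exponentials $w_ie^{\pm 2\pi\ii\xi_i}$ corresponds to the walk count on $\Z^d$, with $(a,b)$ recording how many steps go in each direction. The dual orthogonality relation $\sum_{\xi\in T_A^*}e^{2\pi\ii\vv<x,\xi>}=|\det A|\,\1_{A\Z^d}(x)$ does in one stroke two jobs the paper splits apart: the spectral decomposition of $T_A$ through the $f_\xi$, and the unfolding of the quotient heat kernel into a sum over $A\Z^d$.

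\textbf{What each buys.} Your version is shorter and self-contained. It needs no weighted-graph machinery, no positivity of $w$, and no polynomial-continuation step; it holds verbatim for arbitrary real or even complex $w$. The paper's version gives the conceptual reading of the identity as a discrete Poisson summation (heat-trace) formula. As by-products, it also yields the explicit heat kernel and Laplacian spectrum of the weighted tori $T_A$, in the framework of Grigor'yan, Lin and Yau.
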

In Theorem \ref{thm-main}, when $w_1=w_2=\cdots=w_d=1$, the identity \eqref{eq-weighted-sum} becomes the trigonometric summation formula obtained by Grygor'yan-Lin-Yau \cite{GLY}. When $\det A=\pm 1$, $T_A^{*}=\{0\}$ and the identity \eqref{eq-weighted-sum} becomes the multinomial theorem:
$$(w_1+w_2+\cdots+w_d)^n=\sum_{m\in \mathscr{P}(n,d)}\frac{n!}{m!}w^m.$$
By applying the multinomial theorem to the LHS of \eqref{eq-weighted-sum}, and comparing the coefficients of the polynomials on $w_1,w_2,\cdots, w_d$ of the both sides of \eqref{eq-weighted-sum}, we have the following corollary.
\begin{cor}\label{cor-main}
Let $A$ be a nonsingular $d\times d$ integer matrix and $m\in \Z_{\geq 0}^d$. Then, 
\begin{equation}\label{eq-cor}
\sum_{\xi\in T_A^*}[\cos(2\pi\xi)]^m=\frac{|\det A|}{2^{|m|}}\sum_{\overset{x\in A\Z^d,y\in \Z_{\geq 0}^d}{\abs(x)+2y=m}}\frac{m!}{(\abs(x)+y)!y!}.
\end{equation}
\end{cor}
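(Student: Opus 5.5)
The plan is to derive Corollary \ref{cor-main} from Theorem \ref{thm-main} by comparing coefficients of monomials in $w$. Both sides of \eqref{eq-weighted-sum} are polynomials in $w\in\R^d$. The identity holds for every real $w$, so the two polynomials agree coefficientwise. The work is to identify the coefficient of a fixed monomial $w^m$ with $|m|=n$ on each side.

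\textbf{Left-hand side.} Expand $\vv<w,\cos(2\pi\xi)>^n=\bigl(\sum_i w_i\cos(2\pi\xi_i)\bigr)^n$ by the multinomial theorem quoted after Theorem \ref{thm-main}:
$$\vv<w,\cos(2\pi\xi)>^n=\sum_{m\in\mathscr{P}(n,d)}\frac{n!}{m!}[\cos(2\pi\xi)]^m w^m.$$
Summing over $\xi\in T_A^*$, the coefficient of $w^m$ is
$$\frac{n!}{m!}\sum_{\xi\in T_A^*}[\cos(2\pi\xi)]^m.$$

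\textbf{Right-hand side.} Every term $w^{\abs(x)+2y}$ has total degree $|x|+2|y|=|x|+(n-|x|)=n$. So the monomial $w^m$ with $|m|=n$ collects exactly the pairs $(x,y)$ with $x\in A\Z^d$, $y\in\Z_{\geq0}^d$ and $\abs(x)+2y=m$.

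For such a pair the constraint $y\in\mathscr{P}(\frac{n-|x|}{2},d)$ is automatic, since $|y|=(|m|-|x|)/2=(n-|x|)/2$. Conversely, every admissible pair in the theorem contributes to exactly one monomial. Hence the coefficient of $w^m$ is
$$\frac{|\det A|}{2^n}\sum_{\overset{x\in A\Z^d,y\in \Z_{\geq 0}^d}{\abs(x)+2y=m}}\frac{n!}{(\abs(x)+y)!y!}.$$

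\textbf{Comparison.} Equate the two coefficients, multiply by $m!/n!$, and use $n=|m|$. This gives \eqref{eq-cor} for every $m$ with $|m|\geq1$.

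\textbf{The case $m=0$.} This case is not covered by Theorem \ref{thm-main}, which assumes $n\ge1$, so it is checked directly.
\begin{itemize}
\item The left side is $|T_A^*|=|\det A|$, since $T_A^*=(A^T)^{-1}\Z^d/\Z^d$ has order $|\det A^T|=|\det A|$.
\item On the right, the constraint $\abs(x)+2y=0$ forces $x=0$ and $y=0$.
\item That single term equals $|\det A|\cdot 1$, so both sides agree.
\end{itemize}

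There is no real obstacle in this argument. The only points needing care are these:
\begin{itemize}
\item noting the homogeneity, so that degree-$n$ monomials are isolated;
\item checking that the reindexing between the constraint $y\in\mathscr{P}(\frac{n-|x|}{2},d)$ and the condition $\abs(x)+2y=m$ is a bijection on the relevant terms;
\item justifying coefficient comparison, which holds because a polynomial identity on all of $\R^d$ is an identity of polynomials.
\end{itemize}
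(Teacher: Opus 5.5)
Your proof is correct and takes the same route the paper indicates: expand the left side of \eqref{eq-weighted-sum} by the multinomial theorem, then compare coefficients of $w^m$ with $n=|m|$, using homogeneity to isolate that monomial. Your separate check of the case $m=0$, which Theorem \ref{thm-main} does not cover since it assumes $n\geq 1$, is a small detail the paper leaves implicit.
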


The proof of Theorem \ref{thm-main} is following the arguments in \cite{GLY} and \cite{CY}. It can be viewed as a discrete version of Poisson summation formula or a combinatorial trace formula. 

Let's recall the classical trace formula and Poisson summation formula (see \cite{Be}). Let $(M^n,g)$ be a closed Riemannian manifold, and 
$$0=\mu_1\leq \mu_2\leq \cdots\leq \mu_k\leq\cdots$$
be the Laplacian eigenvalues of $M$ counting multiplicities. Let $$\varphi_1,\varphi_2,\cdots, \varphi_k,\cdots$$
be an orthonormal system of real-valued eigenfunctions with 
$$\Delta \varphi_i=-\mu_i \varphi_i.$$
Then, the heat kernel $H(x,y,t)$ of $M$ can be written as 
$$H(x,y,t)=\sum_{i=1}^\infty e^{-\mu_it}\varphi_i(x)\varphi_i(y).$$
In particular, we have
\begin{equation}\label{eq-trace}
\sum_{i=1}^\infty e^{-\mu_i t}=\int_M H(x,x,t)dV_g(x).
\end{equation}
This is the classical trace formula.  When considering some special Riemannian manifolds where the spectrum data and heat kernel are explicitly known, we will get interesting summation formulas. For example, consider the unit circle $\sph^1=\R/2\pi \Z$. The Laplacian eigenvalues of $\sph^1$ are
\begin{equation}\label{eq-spec-S1}
\mu_1=0,\mu_2=\mu_3=1,\cdots, \mu_{2k}=\mu_{2k+1}=k^2,\cdots.
\end{equation}
The heat kernel of $\sph^1$ is 
$$H([x],[y],t)=\sum_{k\in \Z}p(x+2\pi k,y,t)=\frac{1}{\sqrt{4\pi t}}\sum_{k\in \Z}\exp\left(-\frac{|x+2\pi k-y|^2}{4t}\right)$$
where 
\begin{equation*}
p(x,y,t)=\frac{1}{\sqrt{4\pi t}}e^{-\frac{|x-y|^2}{4t}}
\end{equation*}
is the heat kernel of $\R$. So
$$H([x],[x],t)=\frac{1}{\sqrt{4\pi t}}\sum_{k\in \Z}\exp\left(-\frac{\pi^2k^2}{t}\right)$$
and 
\begin{equation}\label{eq-heat-trace-S1}
\int_{\sph^1}H([x],[x],t)ds=\sqrt{\frac{\pi}{t}}\sum_{k\in \Z}\exp\left(-\frac{\pi^2k^2}{t}\right).
\end{equation}
So, by substituting \eqref{eq-spec-S1} and \eqref{eq-heat-trace-S1} into \eqref{eq-trace}, we get the classical Poisson summation formula:
$$\sum_{k\in \Z}e^{-k^2t}=\sqrt{\frac{\pi}{t}}\sum_{k\in \Z}\exp\left(-\frac{\pi^2k^2}{t}\right).$$

The proof of Theorem \ref{thm-main} goes in the same way as the above. Let $(G,m,w)$ be a weighted finite graph, 
$$0=\mu_1\leq \mu_2\leq\cdots\leq\mu_{|V(G)|}$$
be its Laplacian eigenvalues, and $q_n(x,y)$ be the discrete-time heat kernel of $G$. Then, by a similar argument as the above, one has the discrete trace formula
\begin{equation}\label{eq-d-trace}
\sum_{i=1}^{|V(G)|}(1-\mu_i)^n=\sum_{x\in V(G)}q_n(x,x)m_x.
\end{equation}
For details, see Section 2. Then, by applying \eqref{eq-d-trace} to the discrete tori $\Z^d/A\Z^d$ equipped with a symmetric quotient weight where the both sides of \eqref{eq-d-trace} can be computed explicitly, one obtains \eqref{eq-weighted-sum}. For details, see Section 3. Some interesting analysis on discrete tori can be found in \cite{CJK,Fr,LWZ,Ve}.
\section{Preliminaries}
We first recall some preliminaries on analysis of graphs and derive the discrete trace formula. 

\begin{defn}
A triple $(G,m,w)$ is called a weighted graph if 
\begin{enumerate}
\item $G$ is a simple graph, 
\item $m:V(G)\to (0,+\infty)$ and 
\item $w:E(G)\to (0,+\infty)$.
\end{enumerate}
The function $m$ is called the vertex measure and the function $w$ is called the edge weight. 
\end{defn}

For convenience, we also view $w$ as a function on $V(G)\times V(G)$ which is also denoted as $w$ with 
$$w_{xy}=\left\{\begin{array}{ll}w(e)&e=\{x,y\}\\
0&\mbox{otherwise.}
\end{array}\right.$$
For two functions $f,g:V(G)\to \C$ with finite support, define their inner product as 
$$\vv<f,g>=\sum_{x\in V}f(x)\ol{g(x)}m_x.$$

Let $(G,m,w)$ be a weighted locally finite graph. Then, the Laplacian operator $\Delta$ on $G$ is defined as 
$$\Delta f(x)=\frac{1}{m_x}\sum_{y\in V(G)}(f(y)-f(x))w_{xy},\ \forall\ x\in V(G),$$
where $f\in \C^{V(G)}$. The Laplacian operator is formally self-adjoint:
$$\vv<\Delta f,g>=\vv<f,\Delta g>$$
for any $f,g:V(G)\to \C$ with finite support. 

A sequence of functions $q_n:V(G)\times V(G)\to \R$ with $n\geq 0$ such that 
\begin{equation}\label{eq-d-heat-kernel}
\left\{\begin{array}{ll}q_{n+1}(x,y)-q_{n}(x,y)=\Delta_x q_n(x,y)& n\geq 0\\
q_0(x,y)=\delta_y(x),
\end{array}\right.
\end{equation}
is called the discrete-time heat kernel of $G$. Here $$\delta_y(x)=\frac{1}{m_y}\1_y(x)$$
where
$$\1_y(x)=\left\{\begin{array}{ll}1&x=y\\
0&\mbox{otherwise.}
\end{array}\right.$$

When $(G,m,w)$ is a weighted finite graph, we have the following expression of the discrete-time heat kernel.
\begin{prop}
Let $(G,m,w)$ be a weighted finite graph on $N$ vertices, and $f_1,f_2,\cdots, f_N$ be a unitary system of Laplacian eigenfunctions with 
$$\Delta f_i=-\mu_i f_i.$$
Then, 
\begin{equation}\label{eq-heat-kernel-spec}
q_n(x,y)=\sum_{i=1}^N(1-\mu_i)^nf_i(x)\ol{f_i(y)},\ \forall\ n=0,1,2,\cdots.
\end{equation}
In particular, 
\begin{equation}\label{eq-d-trace-2}
\sum_{i=1}^N(1-\mu_i)^n=\sum_{x\in V(G)}q_n(x,x)m_x,\ \forall\ n=0,1,2,\cdots.
\end{equation}
\end{prop}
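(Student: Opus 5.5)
The plan is to work on the finite-dimensional Hilbert space $\C^{V(G)}$ with the weighted inner product $\vv<f,g>=\sum_x f(x)\ol{g(x)}m_x$, in which $\Delta$ is self-adjoint. A unitary system $f_1,\dots,f_N$ of eigenfunctions (orthonormal with respect to this inner product) then gives the completeness relation
$$\sum_{i=1}^N f_i(x)\ol{f_i(y)}=\frac{1}{m_y}\1_y(x)=\delta_y(x).$$
To prove it, expand $\1_y=\sum_i \vv<\1_y,f_i>f_i$ and note that $\vv<\1_y,f_i>=\ol{f_i(y)}m_y$. This gives \eqref{eq-heat-kernel-spec} for $n=0$.

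Next, define $\tilde q_n(x,y)=\sum_i(1-\mu_i)^nf_i(x)\ol{f_i(y)}$ and check that it satisfies \eqref{eq-d-heat-kernel}. Acting in the variable $x$, linearity gives
$$\Delta_x\tilde q_n=\sum_i(1-\mu_i)^n(-\mu_i)f_i(x)\ol{f_i(y)}=\tilde q_{n+1}-\tilde q_n.$$
The initial condition was verified in the first step. The recursion \eqref{eq-d-heat-kernel} determines $q_{n+1}$ uniquely from $q_n$, so induction on $n$ gives $q_n=\tilde q_n$ for every $n$. One subtlety remains: $q_n$ is required to be real-valued. Since $\Delta$ is a real operator, the recursion produces real functions, and $\tilde q_n$ equals $q_n$, so it is automatically real. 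Alternatively, one can simply choose the $f_i$ real.

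For the trace identity \eqref{eq-d-trace-2}, set $y=x$, multiply by $m_x$ and sum over $x$:
$$\sum_x q_n(x,x)m_x=\sum_i(1-\mu_i)^n\sum_x|f_i(x)|^2m_x=\sum_i(1-\mu_i)^n,$$
because $\vv<f_i,f_i>=1$.

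The only step that needs some care is the completeness relation. It is where the factor $1/m_y$ in $\delta_y$ must match the weighted normalization of the $f_i$. Everything else is a routine induction.
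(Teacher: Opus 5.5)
Your proposal is correct and follows the paper's proof essentially step for step. Both arguments define the spectral sum, verify the recursion termwise, check the initial condition $Q_0(\cdot,y)=\delta_y$ via completeness of the orthonormal eigenbasis, conclude by uniqueness of the recursion, and take the trace using $\langle f_i,f_i\rangle=1$. The only cosmetic difference is that the paper checks $\langle Q_0(\cdot,y),f\rangle=\langle\delta_y,f\rangle$ for arbitrary $f$, whereas you expand $\mathbf{1}_y$ in the eigenbasis; these are the same argument.
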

\begin{proof}
Let $Q_n(x,y)=\sum_{i=1}^N(1-\mu_i)^nf_i(x)\ol{f_i(y)}$. Then, 
\begin{equation*}
\begin{split}
Q_{n+1}(x,y)-Q_n(x,y)=&-\sum_{i=1}^N(1-\mu_i)^n\mu_i f_i(x)\ol{f_i(y)}\\
=&\sum_{i=1}^N(1-\mu_i)^n\Delta_xf_i(x)\ol{f_i(y)}\\
=&\Delta_xQ_n(x,y).
\end{split}
\end{equation*}
Moreover, for any $f:V(G)\to \C$, suppose that 
$$f=\sum_{i=1}^Nc_if_i.$$
Then, 
$$\vv<Q_0(\cdot,y),f>=\sum_{i=1}^N\vv<f_i,f>\ol {f_i(y)}=\sum_{i=1}^N\ol{c_i}\cdot \ol{\varphi_i(y)}=\ol{f(y)}=\vv<\delta_y,f>.$$
So, $Q_0(x,y)=\delta_y(x).$ Thus, the sequence of functions $Q_n(x,y)$ with $n\geq 0$ also satisfy \eqref{eq-d-heat-kernel} and hence 
$$q_n(x,y)=Q_n(x,y), \forall\ x,y\in V(G).$$
This gives us \eqref{eq-heat-kernel-spec}. 

Furthermore, by \eqref{eq-heat-kernel-spec}, we have
\begin{equation*}
\sum_{x\in V(G)}q_n(x,x)m_x=\sum_{x\in V(G)}\sum_{i=1}^N(1-\mu_i)^nf_i(x)\ol{f_i(x)}m_x=\sum_{i=1}^N(1-\mu_i)^n.
\end{equation*}
This completes the proof of the proposition.
\end{proof}
 
Next, we introduce a way to compute the discrete-time heat kernel $q_n(x,y)$ which is similar to that in \cite{Gr,Ba}. Here, we present the method for arbitrary weighted locally finite graphs.

By \eqref{eq-d-heat-kernel}, we have
\begin{equation}\label{eq-heat-kernel-re-1}
\begin{split}
q_{n+1}(x,y)=&q_n(x,y)+\Delta_xq_n(x,y)\\
=&q_n(x,y)+\frac{1}{m_x}\sum_{z\in V}(q_n(z,y)-q_n(x,y))w_{xz}\\
=&(1-\Deg(x))q_n(x,y)+\sum_{z\in V}\frac{w_{xz}}{m_x}q_n(z,y),
\end{split}
\end{equation}
where  $$\Deg(x)=\frac{1}{m_x}\sum_{y\in V}\frac{w_{xy}}{m_x}.$$ In particular, 
$$q_1(x,y)=(1-\Deg(x))q_0(x,y)+\frac{w_{xy}}{m_xm_y}=p_1(x,y)/m_y,$$
where 
\begin{equation}\label{eq-p-1}
p_1(x,y)=(1-\Deg(x))\1_y(x)+\frac{w_{xy}}{m_x}.
\end{equation}
Moreover, by \eqref{eq-heat-kernel-re-1},
\begin{equation}\label{eq-heat-kernel-re-2}
\begin{split}
q_{n+1}(x,y)=&(1-\Deg(x))q_n(x,y)+\sum_{z\in V}\frac{w_{xz}}{m_x}q_n(z,y)\\
=&\sum_{z\in V(G)}\left((1-\Deg(x))\1_x(z)+\frac{w_{xz}}{m_x}\right)q_n(z,y)\\
=&\sum_{z\in V(G)}\left((1-\Deg(x))\frac{\1_z(x)}{m_z}+\frac{w_{xz}}{m_xm_z}\right)q_n(z,y)m_z\\
=&\sum_{z\in V(G)}q_1(x,z)q_n(z,y)m_z.
\end{split}
\end{equation}
Let $P:\C^{V(G)}\to \C^{V(G)}$ be defined as 
$$(Pf)(x)=\sum_{y\in V(G)}p_1(x,y)f(y)$$
which is analogue of the Markov operator in \cite{Gr}. Suppose 
$$(P^nf)(x)=\sum_{y\in V(G)}p_n(x,y)f(y).$$
By this definition, it is natural to take $p_0(x,y)=\1_y(x)$. Then, by that $$P^{m+n}f=P^m(P^nf),$$ 
we have
\begin{equation}\label{eq-p-convolution}
p_{m+n}(x,y)=\sum_{z\in V(G)}p_m(x,z)p_n(z,y)
\end{equation}
for any nonnegative integers $m$ and $n$. Here $p_n(x,y)$ can be viewed as an analogue of the $n$-step transition probability from $x$ to $y$ in \cite{Gr, Ba}. 

Similar to that in \cite{Gr,Ba}, we have the following properties for $q_n$ and its relation to $p_n$.
\begin{lem}\label{lem-p-q}
Let the notations be the same as in the above. Then,
\begin{enumerate}
\item for any $x,y\in V(G)$ and nonnegative integer $n$,
$$q_n(x,y)=p_n(x,y)/m_y;$$
\item for any $x,y\in V(G)$ and nonnegative integers $m$ and $n$,
$$q_{m+n}(x,y)=\sum_{z\in V}q_{m}(x,z)q_n(z,y)m_z;$$
\item for any $x,y\in V(G)$ and nonnegative integers $n$,
$$q_n(x,y)=q_n(y,x).$$
\end{enumerate} 

\end{lem}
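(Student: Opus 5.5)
The plan is to prove (1) first by induction on $n$, and then derive (2) and (3) from (1) together with the convolution identity \eqref{eq-p-convolution}. Throughout, local finiteness guarantees that $p_1(x,\cdot)$ has finite support. Hence $p_n(x,\cdot)$ has finite support for every $n$, and all the sums over $z$ below are finite.

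For (1), the base case $n=0$ is immediate: $p_0(x,y)=\1_y(x)$ and $q_0(x,y)=\delta_y(x)=\1_y(x)/m_y$. The case $n=1$ has already been computed just before \eqref{eq-p-1}. For the inductive step, assume $q_n(z,y)=p_n(z,y)/m_y$ for all $z,y$. Then \eqref{eq-heat-kernel-re-2} gives
$$q_{n+1}(x,y)=\sum_{z}q_1(x,z)q_n(z,y)m_z=\sum_z \frac{p_1(x,z)}{m_z}\cdot\frac{p_n(z,y)}{m_y}\,m_z=\frac{1}{m_y}\sum_z p_1(x,z)p_n(z,y).$$
By \eqref{eq-p-convolution} with $m=1$, the last sum equals $p_{n+1}(x,y)$. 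This closes the induction.

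For (2), substitute (1) into the right-hand side:
$$\sum_z q_m(x,z)q_n(z,y)m_z=\sum_z\frac{p_m(x,z)}{m_z}\frac{p_n(z,y)}{m_y}m_z=\frac{p_{m+n}(x,y)}{m_y}=q_{m+n}(x,y),$$
again using \eqref{eq-p-convolution} and then (1).

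For (3), the key observation is the reversibility relation $m_xp_1(x,y)=m_yp_1(y,x)$. It holds because
$$m_xp_1(x,y)=m_x(1-\Deg(x))\1_y(x)+w_{xy}.$$
The first term vanishes unless $x=y$, in which case both sides agree trivially. The second term is symmetric in $x$ and $y$ since $w_{xy}=w_{yx}$. I then show $m_xp_n(x,y)=m_yp_n(y,x)$ by induction on $n$. The case $n=0$ is trivial. Assuming the relation for $n$, \eqref{eq-p-convolution} gives
$$m_xp_{n+1}(x,y)=\sum_z m_xp_1(x,z)p_n(z,y)=\sum_z p_1(z,x)\,m_zp_n(z,y)=\sum_z p_1(z,x)\,m_yp_n(y,z)=m_yp_{n+1}(y,x),$$
where the last step uses \eqref{eq-p-convolution} once more, in the form $p_{n+1}(y,x)=\sum_z p_n(y,z)p_1(z,x)$. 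Dividing by $m_xm_y$ and applying (1) yields $q_n(x,y)=q_n(y,x)$.

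I do not expect any real obstacle. The only point needing care is the reversibility step, which depends on the diagonal term of $p_1$ being supported on $x=y$ and on the symmetry of $w$. It does not depend on the precise normalization in the displayed formula for $\Deg(x)$, which appears to contain a stray factor $1/m_x$.
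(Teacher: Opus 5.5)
Your proposal is correct and essentially the paper's proof: (1) and (2) are argued exactly as in the paper. Your detailed-balance induction $m_xp_n(x,y)=m_yp_n(y,x)$ in (3) is, via (1), just the paper's induction on the symmetry of $q_n$ (symmetry of $q_1$ and $q_k$, then (2) with $m=k$, $n=1$) multiplied through by $m_xm_y$, and your remark about the stray factor $1/m_x$ in the displayed formula for $\Deg(x)$ is right and harmless.
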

\begin{proof}
(1) We prove this by induction on $n$. It is true for $n=0,1$ by definition. Suppose the conclusion is true for $n=k$. When $n=k+1$, by \eqref{eq-heat-kernel-re-2} and \eqref{eq-p-convolution}, we have 
\begin{equation}
\begin{split}
q_{k+1}(x,y)=&\sum_{z\in V(G)}q_1(x,z)q_k(z,y)m_z\\
=&\sum_{z\in V(G)}p_1(x,z)p_k(z,y)/m_y\\
=&p_{k+1}(x,y)/m_y.
\end{split}
\end{equation}
So, we have the conclusion (1).\\

(2) By (1) and \eqref{eq-p-convolution}, we have
\begin{equation}
\begin{split}
q_{m+n}(x,y)=&p_{m+n}(x,y)/m_y\\
=&\sum_{z\in V(G)}(p_m(x,z)/m_z)(p_n(z,y)/m_y)m_z\\
=&\sum_{z\in V(G)}q_m(x,z)q_n(z,y)m_z.
\end{split}
\end{equation}

(3) We also prove it by induction. The conclusion is clearly true for $n=0,1$ by definition. Suppose the conclusion is true for $n=k$. When $n=k+1$,  by \eqref{eq-heat-kernel-re-2} and (2), we have
\begin{equation*}
\begin{split}
q_{k+1}(x,y)=&\sum_{z\in V(G)}q_1(x,z)q_k(z,y)m_z\\
=&\sum_{z\in V(G)}q_{k}(y,z)q_1(z,x)m_z\\
=&q_{k+1}(y,x).
\end{split}
\end{equation*}
\end{proof}
By (1) of Lemma \ref{lem-p-q}, to compute the discrete-time heat kernel $q_n$, we only need to compute the $n$-step transition probability $p_n$. Note that, by \eqref{eq-p-1}, $p_1(x,y)=0$ when $x\neq y$ and $x\not\sim y$, and when $\Deg(x)=1$ for any $x\in V(G)$, $p_1(x,y)=0$ for $x\not\sim y$. Combining these with \eqref{eq-p-convolution}, we have the following expression for $p_n(x,y)$.
\begin{cor} Let $(G,m,w)$ be a weighted locally finite graph. Then, for any $x,y\in V(G)$ and positive integer $n$,
\begin{equation}\label{eq-pn-g}
p_n(x,y)=\sum_{\omega\in \wt\Omega_n(x,y) }p_1(\omega_0,\omega_1)p_1(\omega_1,\omega_2)\cdots p_1(\omega_{n-1},\omega_n),
\end{equation}
where $\wt\Omega_n(x,y)$ is the collection of $n$-step generalized walks from $x$ to $y$. That is, 
\begin{equation*}
\wt\Omega_n(x,y)=\{\omega:x=\omega_0\seq \omega_1\seq \cdots\seq\omega_n=y|\omega_i\in V(G),\ \forall i=0,1,\cdots,n\}.
\end{equation*} 
Here $u\seq v$ means that $u\sim v$ or $u=v$. In particular, if $(G,m,w)$ is normalized weighted, that is $\Deg(v)=1$ for any $v\in V(G)$, then for any $x,y\in V(G)$ and positive integer $n$,
\begin{equation}\label{eq-pn-r}
p_n(x,y)=\sum_{\omega\in \Omega_n(x,y) }p_1(\omega_0,\omega_1)p_1(\omega_1,\omega_2)\cdots p_1(\omega_{n-1},\omega_n),
\end{equation}
where $\Omega_n(x,y)$ is the collection of $n$-step walks from $x$ to $y$. That is, 
\begin{equation*}
\Omega_n(x,y)=\{\omega:x=\omega_0\sim \omega_1\sim \cdots\sim\omega_n=y|\omega_i\in V(G),\ \forall i=0,1,\cdots,n\}.
\end{equation*} 
\end{cor}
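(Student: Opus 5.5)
The plan is to iterate the convolution identity \eqref{eq-p-convolution} and then discard the terms that vanish. First I will show by induction on $n$ that
$$p_n(x,y)=\sum_{\omega_1,\dots,\omega_{n-1}\in V(G)}p_1(x,\omega_1)p_1(\omega_1,\omega_2)\cdots p_1(\omega_{n-1},y).$$
The case $n=1$ is trivial. For the inductive step, write $p_{n+1}=p_n\ast p_1$ using \eqref{eq-p-convolution} with $m=n$ and the second index equal to $1$, then substitute the inductive expression for $p_n(x,z)$. The sums are finite because $G$ is locally finite: for fixed $x$, only finitely many $z$ have $p_1(x,z)\neq 0$. This needs a short justification, since \eqref{eq-p-convolution} was derived from $P^{m+n}=P^mP^n$. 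On a locally finite graph each $p_1(x,\cdot)$ has finite support, so $P$ acts on all of $\C^{V(G)}$ and the kernel composition is a finite sum. The identity therefore holds pointwise.

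Next I will restrict the index set. By \eqref{eq-p-1}, $p_1(u,v)=0$ whenever $u\neq v$ and $u\not\sim v$. Hence any tuple $(x,\omega_1,\dots,\omega_{n-1},y)$ with some consecutive pair satisfying neither $\omega_{i}=\omega_{i+1}$ nor $\omega_i\sim\omega_{i+1}$ contributes zero. Dropping these tuples leaves exactly the sum over $\wt\Omega_n(x,y)$, which gives \eqref{eq-pn-g}.

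For the normalized case, assume $\Deg(v)=1$ for every $v$. Then the diagonal term $(1-\Deg(u))\1_v(u)$ in \eqref{eq-p-1} vanishes. Since $G$ is simple, $w_{uu}=0$, so $p_1(u,u)=0$ as well. Consequently any generalized walk containing a step with $\omega_i=\omega_{i+1}$ also contributes zero, and the sum reduces to $\Omega_n(x,y)$, which gives \eqref{eq-pn-r}.

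No step is a real obstacle. The only point that needs care is the finiteness and well-definedness remark in the first step for infinite locally finite graphs; everything else is bookkeeping of the vanishing factors.
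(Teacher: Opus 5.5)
Your proposal is correct and follows the paper's own argument, which is stated there only in a single line: iterate the convolution identity \eqref{eq-p-convolution}, then discard the terms containing a vanishing factor, using $p_1(u,v)=0$ for $u\neq v$, $u\not\sim v$, and additionally $p_1(u,u)=0$ when $\Deg\equiv 1$ because $w_{uu}=0$. Your finiteness remark for infinite locally finite graphs fills in a point the paper leaves implicit; with your ordering $p_{n+1}(x,y)=\sum_z p_n(x,z)p_1(z,y)$, the finiteness you actually need is that $p_1(\cdot,y)$ has finite support, not $p_1(x,\cdot)$, but local finiteness gives both.
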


Finally, we recall something about eigenfunctions and discrete-time heat kernel on quotient graphs which is an analogue of that in \cite{GLY}.
 
Let $(G,m,w)$ be a weighted locally finite graph. A bijection $\varphi:V(G)\to V(G)$ is called an automorphism of $(G,m,w)$ if 
$$m\circ \varphi=m \mbox{ and } w\circ(\varphi\times\varphi)=w\circ(\varphi\times\varphi).$$
It is clear that an automorphism of the weighted graph must be at least an graph automorphism. The collection of automorphisms for $(G,m,w)$ is denoted as $\Aut(G,m,w)$ which is called the automorphism group of $(G,m,w)$.

By definition, it is not hard to see that $\Deg$, $p_n$ and $q_n$ are invariant under automorphisms of $(G,m,w)$.
\begin{lem}\label{lem-p-q-invariant}
Let $(G,m,w)$ be a weighted locally finite graph and $\varphi\in \Aut(G,m,w)$. Then, for any $x,y\in V$,  $\Deg(\varphi(x))=\Deg(x)$,
$$p_n(\varphi(x),\varphi(y))=p_n(x,y)$$
and 
$$q_n(\varphi(x),\varphi(y))=q_n(x,y)$$
for any nonnegative integer $n$.
\end{lem}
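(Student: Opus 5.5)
The plan is to verify the three invariances in the order they are stated, each resting on the previous one. Throughout, take $\varphi\in\Aut(G,m,w)$ as meaning $m\circ\varphi=m$ and $w_{\varphi(x)\varphi(y)}=w_{xy}$ for all $x,y\in V(G)$. The defining relation in the paper is evidently a typo for this.

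First, $\Deg(\varphi(x))=\Deg(x)$. In the defining sum for $\Deg(\varphi(x))$, reindex by $y=\varphi(z)$; this is legitimate because $\varphi$ is a bijection of $V(G)$. The invariances $w_{\varphi(x)\varphi(z)}=w_{xz}$ and $m_{\varphi(x)}=m_x$ then turn the sum termwise into the one defining $\Deg(x)$.

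Next, $p_1$. By \eqref{eq-p-1}, $p_1(\varphi(x),\varphi(y))$ is built from three ingredients. The term $\Deg(\varphi(x))$ equals $\Deg(x)$ by the first step. The indicator satisfies $\1_{\varphi(y)}(\varphi(x))=\1_y(x)$ because $\varphi$ is injective. Finally, $w_{\varphi(x)\varphi(y)}/m_{\varphi(x)}=w_{xy}/m_x$. Hence $p_1(\varphi(x),\varphi(y))=p_1(x,y)$.

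For general $n$, induct using \eqref{eq-p-convolution} with $m=1$:
$$p_{n+1}(\varphi(x),\varphi(y))=\sum_{z\in V(G)}p_1(\varphi(x),z)\,p_n(z,\varphi(y)).$$
Substitute $z=\varphi(u)$, which is again a bijective reindexing, and apply the case $n=1$ together with the induction hypothesis. The case $n=0$ is immediate since $p_0(x,y)=\1_y(x)$. Alternatively, one could use \eqref{eq-pn-g}: $\varphi$ maps generalized walks from $x$ to $y$ bijectively onto generalized walks from $\varphi(x)$ to $\varphi(y)$, and it preserves each factor $p_1(\omega_{i},\omega_{i+1})$.

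Finally, for $q_n$, use Lemma \ref{lem-p-q}(1):
$$q_n(\varphi(x),\varphi(y))=\frac{p_n(\varphi(x),\varphi(y))}{m_{\varphi(y)}}=\frac{p_n(x,y)}{m_y}=q_n(x,y).$$

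There is no real obstacle. The only points needing care are using bijectivity of $\varphi$ for the reindexing of the infinite (but locally finite) sums, and reading the automorphism condition correctly as above.
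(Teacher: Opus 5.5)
Your proposal is correct and follows the paper's proof essentially step by step: invariance of $\Deg$ by bijective reindexing, invariance of $p_0$ and $p_1$ directly from \eqref{eq-p-1}, induction on $n$ via \eqref{eq-p-convolution} with the substitution $z=\varphi(u)$, and finally invariance of $q_n$ via Lemma~\ref{lem-p-q}(1). Your reading of the automorphism condition as $w\circ(\varphi\times\varphi)=w$ is also the intended one.
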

\begin{proof}
First, 
\begin{equation*}
\begin{split}
\Deg(\varphi(x))=&\frac{1}{m(\varphi(x))}\sum_{y\in V(G)}w(\varphi(x),y)\\
=&\frac{1}{m(x)}\sum_{y\in V(G)}w(x,\varphi^{-1}(y))\\
=&\Deg(x).
\end{split}
\end{equation*}
Moreover, it is clear that $$p_0(\varphi(x),\varphi(y))=\1_{\varphi(y)}(\varphi(x))=\1_y(x)=p_0(x,y)$$
and
\begin{equation*}
\begin{split}
p_1(\varphi(x),\varphi(y))=&(1-\Deg(\varphi(x)))\1_{\varphi(y)}(\varphi(x))+\frac{w(\varphi(x),\varphi(y))}{m(\varphi(x))}\\
=&(1-\Deg(x))\1_{y}(x)+\frac{w(x,y)}{m(x)}\\
=&p_1(x,y).\\
\end{split}
\end{equation*}
So, $p_0$ and $p_1$ are invariant under $\varphi$. Suppose $p_n$ is invariant under $\varphi$ for $n=k$. Then, when $n=k+1$, by \eqref{eq-p-convolution}
\begin{equation}
\begin{split}
p_{k+1}(\varphi(x),\varphi(y))=&\sum_{z\in V(G)}p_{1}(\varphi(x),z)p_{k}(z,\varphi(y))\\
=&\sum_{z\in V(G)}p_{1}(x,\varphi ^{-1}(z))p_{k}(\varphi^{-1}(z),y)\\
=&p_{k+1}(x,y).
\end{split}
\end{equation} 
Finally, by (1) of Lemma \ref{lem-p-q}, 
$$q_n(\varphi(x),\varphi(y))=p_n(\varphi(x),\varphi(y))/m(\varphi(y))=p_n(x,y)/m(y)=q_n(x,y).$$
\end{proof}
Next, we recall the definition of weighted quotient graph in \cite{GLY}.
\begin{defn}\label{def-weighted-qg}
Let $(G,m,w)$ be a weighted locally finite graph and 
$\Gamma$ be a subgroup of $\Aut(G,m,\mu)$. Then, the weighted quotient graph 
$Q=G/\Gamma$ is defined as follows:
\begin{enumerate}
\item $V(Q):=V(G)/\Gamma$;
\item $E(Q):=\{\{[x],[y]\}\ |\ [x]\neq [y]\ \mbox{ and }E_G([x],[y])\neq\emptyset\}$;
\item for any $[x]\in V(Q)$,
$$m_Q([x]):=m_G(x);$$
\item for any $\{[x],[y]\}\in E(Q)$,
$$w_Q([x],[y]):=\sum_{g\in \Gamma}w_G(x,g_.y).$$
\end{enumerate}
\end{defn}
It is not hard to see that  the quotient weighted graph $(Q,m_Q,w_Q)$ is well-defined:
\begin{enumerate}
\item The vertex-measure $m_Q$ and edge-weight $w_Q$ are well-defined since $m_G$ and $w_G$ are invariant under $\Gamma$;
\item For any $[x],[y]\in V(Q)$,  $[x]\sim_Q[y]$ if and only if $w_Q([x],[y])> 0$.
\end{enumerate}

The following properties of weighted quotient graphs come directly from definition.
\begin{prop}\label{prop-qg}
Let $(G,m,w)$ be a weighted locally finite graph, $\Gamma$ be a subgroup of $\Aut(G,m,w)$ and $(Q,m_Q,w_Q)$ be the weighted quotient graph of $G$ over $\Gamma$. Then, 
\begin{enumerate}
\item for any $[x]\in V(Q)$, 
$$\Deg_Q([x])=\Deg_G(x);$$
\item for any $f\in \C^{V(Q)}$, 
$$\pi^*(\Delta_Qf)=\Delta_G(\pi^*f)$$
where $\pi:V(G)\to V(Q)$ is the quotient map;
\item if $G$ is a normalized weighted graph or $\Gamma$ acts on $G$ freely, then for any $[x],[y]\in V(Q)$ and positive integer $n$, 
$$p_n^Q([x],[y])=\sum_{g\in \Gamma}p^G_n(x,g_.y);$$
\item if $G$ is a normalized weighted graph or $\Gamma$ acts on $G$ freely, then for any $[x],[y]\in V(Q)$ and positive integer $n$,
$$q_n^Q([x],[y])=\sum_{g\in \Gamma}q_n^G(x,g_.y).$$
\end{enumerate}
\end{prop}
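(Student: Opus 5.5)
The four items will be proved in the order stated, with (3) carrying most of the work and (4) following from it via Lemma \ref{lem-p-q}.

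\emph{Item (1).} I will use that $\pi$ restricted to the neighbourhood of $x$ groups the neighbours $y$ of $x$ by their $\Gamma$-orbit $[y]$. Hence
\[
\sum_{[y]\neq[x]} w_Q([x],[y]) = \sum_{y\notin \Gamma x} w_G(x,y).
\]
The orbit terms $w_G(x,g_.x)$ with $g_.x\neq x$ need a remark. Definition \ref{def-weighted-qg} drops edges inside an orbit, so whether they count depends on whether such edges exist. I will either assume they do not, or note that the paper's formula for $\Deg$ is intended as $\frac1{m_x}\sum_y w_{xy}$. Dividing by $m_Q([x])=m_G(x)$ then gives $\Deg_Q([x])=\Deg_G(x)$, using well-definedness from the $\Gamma$-invariance of $m,w$.

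\emph{Item (2).} This is the same regrouping applied to $\sum_y (f([y])-f([x]))w_G(x,y)$, split into orbits. Terms with $[y]=[x]$ contribute zero, so no issue arises there.

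\emph{Item (3).} I will first check the case $n=1$. From \eqref{eq-p-1},
\[
p_1^Q([x],[y]) = (1-\Deg_Q([x]))\1_{[y]}([x]) + \frac{w_Q([x],[y])}{m_Q([x])}.
\]
The right side is
\[
\sum_g p_1^G(x,g_.y) = (1-\Deg_G(x))\#\{g: g_.y=x\} + \sum_g \frac{w_G(x,g_.y)}{m_x}.
\]
Here lies the main obstacle: the diagonal multiplicity $\#\{g:g_.y=x\}$ equals the stabilizer size when $[x]=[y]$. It is $1$ exactly when the action is free. In the normalized case it is irrelevant because the coefficient $1-\Deg$ vanishes. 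The off-diagonal terms match by the definition of $w_Q$. (One must also interpret $\sum_g w_G(x,g_.y)$ with a stabilizer; the definition of $w_Q$ sums over $\Gamma$, so it is consistent.) For the induction step I will use \eqref{eq-p-convolution}. Choosing representatives $z$ for $[z]$,
\[
p^Q_{k+1}([x],[y]) = \sum_{[z]} p_1^Q([x],[z])\, p_k^Q([z],[y]) = \sum_{[z]}\sum_{h}p_1^G(x,h_.z)\sum_g p_k^G(z,g_.y).
\]
By Lemma \ref{lem-p-q-invariant}, $p_k^G(z,g_.y)=p_k^G(h_.z,hg_.y)$. Reindexing $g\mapsto hg$, and letting $u=h_.z$ run over all of $V(G)$, this becomes $\sum_g\sum_u p_1^G(x,u)p_k^G(u,g_.y)=\sum_g p_{k+1}^G(x,g_.y)$. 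In the freeness case $u=h_.z$ is a bijection between $\Gamma\times$ (representatives) and $V(G)$. In the normalized, non-free case, stabilizer overcounting must be handled, and I would check that the $n=1$ formula already carries the same multiplicity convention, so that the counting is consistent.

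\emph{Item (4).} This is immediate from (3) and Lemma \ref{lem-p-q}(1):
\[
q_n^Q([x],[y]) = \frac{p_n^Q([x],[y])}{m_Q([y])} = \sum_g \frac{p_n^G(x,g_.y)}{m_G(g_.y)} = \sum_g q_n^G(x,g_.y),
\]
using $m_G(g_.y)=m_G(y)=m_Q([y])$.
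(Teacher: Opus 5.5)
Your outline follows the paper's route. You regroup by orbits for (1) and (2). For (3) you check $n=1$ and then induct via \eqref{eq-p-convolution}, Lemma~\ref{lem-p-q-invariant} and the reindexing $g\mapsto hg$, $u=h_.z$. Item (4) then comes from Lemma~\ref{lem-p-q}(1). For a free action this goes through. The genuine gap is the case you defer: $G$ normalized and $\Gamma$ not free. You say the diagonal term is irrelevant because $1-\Deg$ vanishes. On the $Q$ side that needs $\Deg_Q([x])=1$, i.e.\ item (1). But your regrouping $\sum_{[y]\neq[x]}w_Q([x],[y])=\sum_{y\notin\Gamma x}w_G(x,y)$ silently assumes trivial stabilizers. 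In general $\sum_{g\in\Gamma}w_G(x,g_.y)=|\Gamma_y|\sum_{y'\in\Gamma y}w_G(x,y')$. Likewise, in your induction step the map $(h,[z])\mapsto h_.z$ hits each $u\in V(G)$ exactly $|\Gamma_u|$ times, not once.

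No choice of ``multiplicity convention'' repairs this, because the statement is false there. Take the path $a\sim b\sim c$ with unit edge weights, $m_a=m_c=1$ and $m_b=2$, so $G$ is normalized. Let $\Gamma=\{e,s\}$, where $s$ swaps $a,c$ and fixes $b$. Then:
\begin{itemize}
\item $w_Q([a],[b])=w_G(a,b)+w_G(a,s_.b)=2$, so $\Deg_Q([a])=2\neq1=\Deg_G(a)$;
\item item (2) fails at $a$ by a factor $2$;
\item $p_1^Q([a],[a])=1-2=-1$, while $\sum_{g}p_1^G(a,g_.a)=p_1^G(a,a)+p_1^G(a,c)=0$.
\end{itemize}
The paper's proof has the same hole. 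Its normalized case at $n=1$ invokes (1), and the last equality of its induction is exactly the bijection that needs freeness. So the correct fix is to assume $\Gamma$ acts freely.

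Separately, (1) also needs that no edge joins two vertices of one orbit. For example, $C_4$ modulo its rotations gives a one-vertex $Q$ with $\Deg_Q=0$. Your alternative about the typo in $\Deg$ does not address this. For free actions, (2)--(4) survive such edges, since the weight missing from $w_Q([x],[x])$ reappears in $1-\Deg_Q([x])$. In that case, though, your $n=1$ diagonal check should be done directly rather than via (1).
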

\begin{proof}
(1) For any $[x]\in V(Q)$, 
\begin{equation*}
\begin{split}
\Deg_Q([x])=&\frac{1}{m_Q([x])}\sum_{[y]\in V(Q)}w_Q([x],[y])\\
=&\frac{1}{m_G(x)}\sum_{[y]\in V(Q)}\sum_{g\in \Gamma}w_G(x,g_.y)\\
=&\Deg_G(x).
\end{split}
\end{equation*}

(2) For any $x\in V(G)$, 
\begin{equation}
\begin{split}
\pi^*(\Delta_Qf)(x)=&\Delta_Qf([x])\\
=&\frac{1}{m_Q([x])}\sum_{[y]\in V(Q)}(f([y])-f([x]))w_{Q}([x],[y])\\
=&\frac{1}{m_G(x)}\sum_{[y]\in V(Q)}(\pi^*f(y)-\pi^*f(x))\sum_{g\in \Gamma}w_G(x,g_.y)\\
=&\frac{1}{m_G(x)}\sum_{g\in \Gamma}\sum_{[y]\in V(Q)}(\pi^*f(g_.y)-\pi^*f(x))w_G(x,g_.y)\\
=&(\Delta_G\pi^*f)(x).
\end{split}
\end{equation}

(3) For any $[x],[y]\in V(Q)$, by \eqref{eq-p-1}, 
\begin{equation}
\begin{split}
p_1^Q([x],[y])=&(1-\Deg_Q([x]))\1_{[y]}([x])+\frac{w_Q([x],[y])}{m_Q([x])}\\
=&(1-\Deg_G(x))\1_{[y]}([x])+\frac{\sum_{g\in\Gamma}w_G(x,g_.y)}{m_G(x)}\\
\end{split}
\end{equation}
If $\Deg_G(x)=1$ for any $x\in V(G)$, then 
$$p_1^Q([x],[y])=\sum_{g\in \Gamma}\frac{w_G(x,g_.y)}{m_x}=\sum_{g\in \Gamma}p_1^G(x,g_.y).$$
If $\Gamma$ acts on $G$ freely, then 
$$\1_{[y]}([x])=\sum_{g\in \Gamma}\1_{g_.y}(x)$$
since if $[y]=[x]$, then there is a unique $g\in \Gamma$ such that $x=g_.y$. So, the same as before, we have
$$p_1^Q([x],[y])=\sum_{g\in \Gamma}p_1^G(x,g_.y).$$
Suppose that the conclusion is true for $n=k$. Then, when $n=k+1$, by \eqref{eq-p-convolution} and Lemma \ref{lem-p-q-invariant},
\begin{equation}
\begin{split}
p_{k+1}^Q([x],[y])=&\sum_{[z]\in V(Q)}p_1^Q([x],[z])p_{k}^Q([z],[y])\\
=&\sum_{[z]\in V(Q)}\left(\sum_{g\in \Gamma}p_1^G(x,g_.z)\right)\left(\sum_{h\in \Gamma}p_{k}^G(z,h_.y)\right)\\
=&\sum_{[z]\in V(Q)}\sum_{g,h\in \Gamma}p_1^G(x,g_.z)p_{k}^G(z,h_.y)\\
=&\sum_{[z]\in V(Q)}\sum_{g,h\in \Gamma}p_1^G(x,g_.z)p_{k}^G(g_.z,gh_.y)\\
=&\sum_{[z]\in V(Q)}\sum_{h,k\in \Gamma}p_1^G(x,kh^{-1}_.z)p_{k}^G(kh^{-1}_.z,k_.y)\\
=&\sum_{k\in \Gamma}\sum_{[z]\in V(Q)}\sum_{h\in \Gamma}p_1^G(x,kh^{-1}_.z)p_{k}^G(kh^{-1}_.z,k_.y)\\
=&\sum_{k\in \Gamma}p^G_{k+1}(x,k_.y).
\end{split}
\end{equation}
This completes the proof of (3).

(4) By (1) of Lemma \ref{lem-p-q} and (3), we have 
$$q_n^{Q}([x],[y])=p^Q_n([x],[y])/m_Q([y])=\sum_{g\in \Gamma}p^G_n(x,g_.y)/m_G(g_.y)=\sum_{g\in\Gamma}q_n^G(x,g_.y).$$
\end{proof}
\begin{rem}
Note that $$p_0^Q([x],[y])=1_{[y]}([x])=\sum_{g\in \Gamma}1_{g_.y}(x)=\sum_{g\in \Gamma}p_0^G(x,g_.y)$$
is only true when $\Gamma$ acts on $G$ freely.
\end{rem}
\section{Weighted tori and proof of Theorem \ref{thm-main}}
In this section, we compute the discrete-time heat kernel and the Laplacian spectrum of the weighted tori $T_A=\Z^d/A\Z^d$ (see the definition below), and apply \eqref{eq-d-trace-2} to prove Theorem \ref{thm-main}.

Let $\Z^d$ be the integer lattice graph and $w=(w_1,w_2,\cdots,w_d)\in \R^d_{>0}$.
Let 
$$w_{\{x,x+e_i\}}=w_i>0$$
for $i=1,2,\cdots, d$ and $x\in \Z^d$, and
$$m_x=\sigma:=2\sum_{i=1}^dw_i$$
for any $x\in \Z^d$. Then, $(\Z^d,m,w)$ is a normalized weighted graph.

For $z\in \Z^n$, denote the translation map $x\mapsto x+z$ as $\tau_z$. It is clear that 
$$\tau_z,\diag(\pm1,\cdots,\pm1)\in \Aut(\Z^d,m,w)$$
for any $z\in \Z^d$.
So, by Lemma \ref{lem-p-q-invariant}, 
\begin{equation}\label{eq-p-Zd-sym}
p_n^{\Z^d}(x,y)=p_n^{\Z^d}(0,y-x)=p_n^{\Z^d}(0,\abs(y-x))
\end{equation}
for any $x,y\in \Z^d$.

\begin{thm}
Let $(\Z^d,m,w)$ be the weighted integer lattice graph defined above. Then, for any $x,y\in \Z^d$ and positive integer $n$,
$$p_n^{\Z^d}(x,y)=\frac{1}{\sigma^n}\sum_{z\in \mathscr{P}(\frac{n-|y-x|}{2},d)}\frac{n!}{(\abs(y-x)+z)!z!}w^{\abs(y-x)+2z}$$
and
$$q_n^{\Z^d}(x,y)=\frac{1}{\sigma^{n+1}}\sum_{z\in \mathscr{P}(\frac{n-|y-x|}{2},d)}\frac{n!}{(\abs(y-x)+z)!z!}w^{\abs(y-x)+2z}.$$
\end{thm}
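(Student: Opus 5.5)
By \eqref{eq-p-Zd-sym} it suffices to treat $x=0$ and to compute $p_n^{\Z^d}(0,v)$ with $v=y-x$. Since $(\Z^d,m,w)$ is normalized weighted, \eqref{eq-pn-r} expresses $p_n(0,v)$ as a sum over $n$-step walks from $0$ to $v$. Each step of such a walk goes from some $u$ to $u\pm e_i$ and contributes the factor $p_1(u,u\pm e_i)=w_i/\sigma$. So a walk taking $a_i$ steps in direction $+e_i$ and $b_i$ steps in direction $-e_i$ has weight $\sigma^{-n}\prod_i w_i^{a_i+b_i}$.

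Next I count the walks by their step types. A walk is a word of length $n$ in the $2d$ letters $\pm e_i$, so the number of walks with prescribed counts $(a_i,b_i)$ is the multinomial coefficient $n!/\prod_i a_i!b_i!$. The walk ends at $v$ exactly when $a_i-b_i=v_i$ for every $i$ and $\sum_i(a_i+b_i)=n$. I parametrize these solutions by setting $z_i=\min(a_i,b_i)\ge 0$. Then $\{a_i,b_i\}=\{|v_i|+z_i,\,z_i\}$, so $a_i+b_i=|v_i|+2z_i$ and $a_i!b_i!=(|v_i|+z_i)!\,z_i!$. The length constraint becomes $|v|+2|z|=n$, that is, $z\in\mathscr{P}(\frac{n-|v|}{2},d)$. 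This set is empty, and correctly so, when the parity is wrong or when $|v|>n$.

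Summing over these $z$ gives
$$p_n(0,v)=\frac{1}{\sigma^n}\sum_{z\in\mathscr{P}(\frac{n-|v|}{2},d)}\frac{n!}{(\abs(v)+z)!z!}w^{\abs(v)+2z},$$
which is the first formula. The formula for $q_n$ then follows from (1) of Lemma \ref{lem-p-q}, since $m_y=\sigma$ gives $q_n=p_n/\sigma$.

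The step needing the most care is the bijection from $(a_i,b_i)$ to $z_i$. When $v_i=0$ one has $a_i=b_i=z_i$ and there is no double counting. When $v_i\neq 0$ the sign of $v_i$ determines which of $a_i,b_i$ is larger, so $z$ alone determines $(a,b)$. With this checked, everything else is routine bookkeeping.
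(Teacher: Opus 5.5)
Your proof is correct and follows essentially the paper's argument. You reduce to $x=0$, expand $p_n$ over walks via \eqref{eq-pn-r}, count walks with prescribed step types by a multinomial coefficient, and divide by $m_y=\sigma$. The one cosmetic difference is that the paper also uses the reflections $\diag(\pm1,\cdots,\pm1)$ in \eqref{eq-p-Zd-sym} to assume $y\in\Z^d_{\geq 0}$, so its $z_k$ is simply the number of $-e_k$ steps, whereas your choice $z_i=\min(a_i,b_i)$ handles all sign patterns directly.
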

\begin{proof}
By \eqref{eq-p-Zd-sym} and (1) of Lemma \ref{lem-p-q}, we only need to prove the first formula for $x=0$ and $y\in \Z^d_{\geq 0}$. Let 
$$\omega:0=\omega_0\sim \omega_1\sim \cdots\sim \omega_n=y$$
be a walk joining $0$ to $y$ and  $u_i=\omega_{i}-\omega_{i-1}$. Then $u_i=\pm e_{k_i}$ for some $k_i=1,2,\cdots,d$, and 
$$u_1+u_2+\cdots+u_n=y.$$
Let $a_k$ be the number of $e_k$ in $u_1,u_2,\cdots, u_n$ and $z_k$ be the number of $-e_k$ in $u_1,u_2,\cdots,u_n$. Then,
$$\sum_{k=1}^da_k+\sum_{k=1}^d z_k=n$$
and 
$$a_k-z_k=y_k$$
for $k=1,2,\cdots,d$. It is then clear that 
$$z\in\mathscr{P}\left(\frac{n-|y|}{2},d\right).$$

Conversely, for any $z\in \mathscr{P}\left(\frac{n-|y|}{2},d\right)$, let $a=y+z$. Then, each element in 
$$\mathscr{U}(y,z):=\left\{u:=(u_1,u_2,\cdots,u_n)\bigg|\begin{array}{l}\mbox{$e_k$ and $-e_k$ appears $a_k$ and $z_k$ times in $u$}\\
\mbox{respectively, for $k=1,2,\cdots,d$.}
\end{array} \right\}$$
corresponds to a walk 
$$\omega:0=\omega_0\sim \omega_1\sim \cdots\sim \omega_n=y$$
with $u_i=\omega_{i}-\omega_{i-1}$. Therefore, by \eqref{eq-pn-r} and \eqref{eq-p-Zd-sym}, 
\begin{equation*}
\begin{split}
p_n^{\Z_n}(0,y)=&\sum_{\omega\in \Omega_n(0,y)}p_1^{\Z_n}(\omega_0,\omega_1)\p_1^{\Z^n}(\omega_1,\omega_2)\cdot p_1^{\Z^n}(\omega_{n-1},\omega_n)\\
=&\sum_{\omega\in \Omega_n(0,y)}p_1^{\Z_n}(0,\omega_1-\omega_0)p_1^{\Z^n}(0,\omega_2-\omega_1)\cdots p_1^{\Z^n}(0,\omega_n-\omega_{n-1})\\
=&\sum_{z\in \mathscr{P}(\frac{n-|y|}{2},d)}\sum_{u\in\mathscr{U}(y,z) }p_1^{\Z_n}(0,u_1)p_1^{\Z^n}(0,u_2)\cdots p_1^{\Z^n}(0,u_n)\\
=&\sum_{z\in \mathscr{P}(\frac{n-|y|}{2},d)}|\mathscr{U}(y,z)|\frac{w^{y+2z}}{\sigma^n}\\
=&\frac{1}{\sigma^n}\sum_{z\in \mathscr{P}(\frac{n-|y|}{2},d)}\frac{n!}{(y+z)!z!}w^{y+2z}.
\end{split}
\end{equation*}
This completes the proof of the theorem. 
\end{proof}

For any $d\times d$ nonsingular integer matrix $A$, $A\Z^d$ is a lattice  in $\Z^d$. Note that the weighted graph $(\Z^d,m,w)$ is invariant under translations. It induces a weighted quotient graph on $T_A=\Z^d/A\Z^d$ as in Definition \ref{def-weighted-qg}. We call $T_A$ the weighted discrete tori induced by $A$. By (4) of Proposition \ref{prop-qg}, we have the following conclusion.
\begin{cor}\label{cor-q-TA}
Let $A$ be a $d\times d$ nonsingular integer matrix and $T_A=\Z^d/A\Z^d$ be the weighted discrete tori induced by $A$ defined above. Then, for any $[x],[y]\in T_A$ and $n\geq 1$,
\begin{equation}
q_n^{T_A}([x],[y])=\frac{1}{\sigma^{n+1}}\sum_{g\in A\Z^d}\sum_{z\in \mathscr{P}(\frac{n-|y+g-x|}{2},d)}\frac{n!}{(\abs(y+g-x)+z)!z!}w^{\abs(y+g-x)+2z}.
\end{equation}
In particular, 
\begin{equation}
q_n^{T_A}([x],[x])=\frac{1}{\sigma^{n+1}}\sum_{g\in A\Z^d}\sum_{z\in \mathscr{P}(\frac{n-|g|}{2},d)}\frac{n!}{(\abs(g)+z)!z!}w^{\abs(g)+2z}
\end{equation}
for any $[x]\in T_A$ and $n\geq 1$.
\end{cor}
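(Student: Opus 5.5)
The statement of Corollary \ref{cor-q-TA} follows by combining (4) of Proposition \ref{prop-qg} with the explicit formula for $q_n^{\Z^d}$ in the preceding theorem.

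\textbf{Step 1: quotient structure.} Take $\Gamma=\{\tau_g\ |\ g\in A\Z^d\}$. This is a subgroup of $\Aut(\Z^d,m,w)$, since the translations $\tau_z$ preserve both $m$ and $w$. By Definition \ref{def-weighted-qg}, the quotient $\Z^d/\Gamma$ is exactly $T_A$. The graph $(\Z^d,m,w)$ is normalized: every vertex $x$ has neighbors $x\pm e_i$, so $\sum_y w_{xy}=2\sum_i w_i=\sigma=m_x$ and hence $\Deg\equiv1$. (In addition, translation by a nonzero lattice vector fixes no vertex, so $\Gamma$ acts freely.) Either of these conditions is enough for (4) of Proposition \ref{prop-qg} to apply, which gives, for $n\geq1$,
$$q_n^{T_A}([x],[y])=\sum_{g\in A\Z^d}q_n^{\Z^d}(x,y+g).$$

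\textbf{Step 2: substitution.} Insert the preceding theorem's formula for $q_n^{\Z^d}(x,y+g)$, with $y+g-x$ in place of $y-x$. This yields the first displayed formula directly. For the diagonal case, set $y=x$, so that the summand depends only on $g$. This gives the second formula, and it shows the value is independent of $[x]$.

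\textbf{Step 3: convergence.} The sum over $A\Z^d$ is effectively finite. The set $\mathscr{P}(\frac{n-|y+g-x|}{2},d)$ is empty unless $|y+g-x|\leq n$, and only finitely many lattice points satisfy this. So no convergence issue arises.

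The only point needing care is confirming the hypotheses of (4) of Proposition \ref{prop-qg}, namely normality or a free action, and these are immediate here. I therefore expect no real obstacle in this proof.
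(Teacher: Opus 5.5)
Your proposal is correct and follows the paper's argument: apply (4) of Proposition \ref{prop-qg} to $\Gamma=A\Z^d$ acting by translations, then substitute the explicit formula for $q_n^{\Z^d}$. Your checks that $(\Z^d,m,w)$ is normalized and the action is free, and that the sum over $A\Z^d$ is finite because only $g$ with $|y+g-x|\leq n$ contribute, are accurate details the paper leaves implicit.
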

Next, we imitate the method in \cite{GLY} to compute the Laplacian spectrum of $T_A$. 
\begin{prop}\label{prop-spec-TA}
Let $A$ be a $d\times d$ nonsingular integer matrix and $T_A$ be the weighted discrete torus induced by $A$. Then,  
\begin{enumerate}
\item for any $\xi\in T^*_{A}$, $f_\xi(x)=e^{2\pi\ii\vv<\xi,x>}$ is a Laplacian eigenfunction of $T_A$ with eigenvalue $\left(1-\frac{2}{\sigma}\vv<w,\cos(2\pi\xi)>\right)$;
\item the family of functions $f_\xi$ with $\xi\in T_A^*$ are orthogonal of each other;
\item the Laplacian spectrum of $T_A$ consists of $\left(1-\frac{2}{\sigma}\vv<w,\cos(2\pi\xi)>\right)$ with $\xi\in T_A^*$.
\end{enumerate}
\end{prop}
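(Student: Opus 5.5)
First I will check that each $f_\xi$ is a well-defined function on $T_A$. For $\xi\in (A^T)^{-1}\Z^d$ and $g=Ak\in A\Z^d$ we have $\vv<\xi,Ak>=\vv<A^T\xi,k>\in\Z$, so $f_\xi(x+g)=f_\xi(x)$ and $f_\xi$ descends to $T_A$. If $\xi$ is changed by an element of $\Z^d$, then $f_\xi$ on $\Z^d$ does not change, so $f_\xi$ depends only on the class of $\xi$ in $T_A^*$.

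For (1), I will use (2) of Proposition \ref{prop-qg}: to compute $\Delta_{T_A}f_\xi$ it suffices to compute $\Delta_{\Z^d}$ of the pullback. On $(\Z^d,m,w)$ with $m\equiv\sigma$ and $w_{\{x,x\pm e_i\}}=w_i$,
$$\Delta f_\xi(x)=\frac{1}{\sigma}\sum_{i=1}^d w_i\left(e^{2\pi\ii\xi_i}+e^{-2\pi\ii\xi_i}-2\right)f_\xi(x)=\left(\frac{2}{\sigma}\vv<w,\cos(2\pi\xi)>-1\right)f_\xi(x).$$
Here I use $\sum_i 2w_i=\sigma$. With the paper's convention $\Delta f=-\mu f$, this gives $\mu_\xi=1-\frac{2}{\sigma}\vv<w,\cos(2\pi\xi)>$.

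For (2), the measure on $T_A$ is constant, so it is enough to show $\sum_{[x]\in T_A}e^{2\pi\ii\vv<\xi-\eta,x>}=0$ whenever $\xi\neq\eta$ in $T_A^*$. The map $\chi:[x]\mapsto e^{2\pi\ii\vv<\zeta,x>}$ with $\zeta=\xi-\eta$ is a character of the finite abelian group $\Z^d/A\Z^d$. It is nontrivial: if $\vv<\zeta,x>\in\Z$ for all $x\in\Z^d$, then $\zeta\in\Z^d$, contradicting $\xi\neq\eta$ in $T_A^*$. For a nontrivial character pick $[x_0]$ with $\chi([x_0])\neq1$; translating the sum by $[x_0]$ multiplies it by $\chi([x_0])$, so the sum vanishes.

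For (3), the eigenfunctions $f_\xi$ are nonzero and pairwise orthogonal, so they are linearly independent. It then suffices to show $|T_A^*|=|T_A|=|\det A|$, since then they form a basis of $\C^{V(T_A)}$ and the spectrum counted with multiplicity is exactly the list $\mu_\xi$, $\xi\in T_A^*$. This counting is the one point needing care. I plan to use the standard fact that for a nonsingular integer matrix $B$, $|\Z^d/B\Z^d|=|\det B|$, which follows from the Smith normal form. This gives $|T_A|=|\det A|$. For $T_A^*$, the map $\xi\mapsto A^T\xi$ is an isomorphism $(A^T)^{-1}\Z^d/\Z^d\cong \Z^d/A^T\Z^d$, whose order is $|\det A^T|=|\det A|$. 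Alternatively, one could avoid explicit counting by noting that $\xi\mapsto\chi_\xi$ is an injective map into the character group of $T_A$, which has order $|T_A|$, and combining this with orthogonality; either route gives the equality of cardinalities.
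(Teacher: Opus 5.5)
Your proposal is correct and follows the paper's route. You pull back to $\Z^d$ via (2) of Proposition \ref{prop-qg} to get the eigenvalue, show the characters $f_\xi$ are orthogonal, and finish with the count $|T_A|=|T_A^*|=|\det A|$; the paper only cites the invariance and orthogonality from Grigor'yan--Lin--Yau and asserts the count, whereas you prove all three (the count via Smith normal form and $\xi\mapsto A^T\xi$). One caution on your optional character-group aside: injectivity of $\xi\mapsto\chi_\xi$ and orthogonality only give $|T_A^*|\le|T_A|$, so that route also needs surjectivity (every character of $\Z^d$ trivial on $A\Z^d$ has the form $x\mapsto e^{2\pi\ii\vv<\zeta,x>}$ with $A^T\zeta\in\Z^d$), but your main argument already settles the count.
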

\begin{proof}
(1) The same as the argument in the proof of \cite[Lemma 3.1]{GLY}, we know that $f_\xi$ is invariant under $A\Z^d$, So $f_\xi$ can be viewed as a function on $T_A$. Moreover, 
note that 
\begin{equation*}
\begin{split}
\Delta_{\Z^d} f_\xi(x)
=&\frac{1}{m(x)}\sum_{y\in \Z^d}(f_\xi(y)-f_\xi(x))w_{xy}\\
=&\frac{1}{\sigma}\sum_{i=1}^d(f_\xi(x+e_i)+f_\xi(x-e_i)-2f_\xi(x))w_i\\
=&\frac{1}{\sigma}\sum_{i=1}^d\left( e^{2\pi\ii \vv<\xi,(x+e_i)>}+e^{2\pi\ii \vv<\xi,(x-e_i)>}-2e^{2\pi\ii \vv<\xi,x>}\right)w_i\\
=&-\left(1-\frac{2}{\sigma}\sum_{i=1}^dw_i\cos(2\pi\xi_i)\right)f_\xi(x).
\end{split}
\end{equation*}
Then, by (2) of Proposition \ref{prop-qg}, we complete the proof of (1).

(2) The conclusion is proved  in \cite[Lemma 3.1]{GLY}.

(3) Note that $|T_A|=|T_{A}^*|=|\det A|$. Combining this with (1) and (2), we get the conclusion.
\end{proof}
We are now ready to prove Theorem \ref{thm-main} by using \eqref{eq-d-trace-2}.
\begin{proof}[Proof of Theorem \ref{thm-main}] Applying \eqref{eq-d-trace-2} to the weighted discrete tori $T_A$, and using Corollary \ref{cor-q-TA} and Proposition \ref{prop-spec-TA}, we have 
\begin{equation*}
\begin{split}
&\sum_{\xi\in T_A^*}\left(\frac{2}{\sigma}\vv<w,\cos(2\pi\xi)>\right)^n\\
=&\sum_{[x]\in T_A}q_n^{T_A}([x],[x])m_{T_A}([x])\\
=&|\det A|\sigma \times\frac{1}{\sigma^{n+1}}\sum_{g\in A\Z^d}\sum_{z\in \mathscr{P}(\frac{n-|g|}{2},d)}\frac{n!}{(\abs(g)+z)!z!}w^{\abs(g)+2z} \\
=&\frac{|\det A|}{\sigma^{n}}\sum_{g\in A\Z^d}\sum_{z\in \mathscr{P}(\frac{n-|g|}{2},d)}\frac{n!}{(\abs(g)+z)!z!}w^{\abs(g)+2z} \\
\end{split}
\end{equation*}
for $n\geq 1$. This gives us \eqref{eq-weighted-sum} for $w_1,w_2,\cdots, w_d>0$. Note that the both sides of \eqref{eq-weighted-sum} are polynomials on $w_1,w_2,\cdots,w_d$. So, the polynomials of the both sides must be the same and this gives us \eqref{eq-weighted-sum} for any $w\in\R^d$. This completes the proof of Theorem \ref{thm-main}.
\end{proof}

\end{document}